\newtheorem{thm}{Theorem}[section]  
\newtheorem{prop}[thm]{Proposition}
\newtheorem{lemma}[thm]{Lemma} 
\newtheorem{cor}[thm]{Corollary}
\theoremstyle{definition}
\newtheorem{defin}[thm]{Definition}
\newcommand{\aaa}{\mbox{$\alpha$}}
\newcommand{\ddd}{\mbox{$\delta$}} 
\newcommand{\eee}{\mbox{$\epsilon$}}
\newcommand{\tM}{\tilde{M}} 
\newcommand{\tD}{\tilde{D}} 
\newcommand{\tE}{\tilde{E}} 
\newcommand{\tx}{\tilde{x}} 
\newcommand{\tb}{\tilde{b}}
\newcommand{\tX}{\tilde{X}}
\newcommand{\tL}{\tilde{L}}
\newcommand{\Rrr}{\mathbb R}
\newcommand{\Zzz}{\mathbb Z}
\begin{document}  

\title{The kernel of the Goldberg homomorphism is not finitely generated}   

\author{Martin Scharlemann}
\address{\hskip-\parindent
        Martin Scharlemann\\
        Mathematics Department\\
        University of California\\
        Santa Barbara, CA 93106-3080 USA}
\email{mgscharl@math.ucsb.edu}

\date{\today}

\begin{abstract}  Let $M$ be a closed surface not the sphere or projective plane.  Goldberg \cite{Gol} defined a natural homomorphism from the $n$-stranded pure braid group of $M$ to the n-fold product of $\pi_1(M)$ and showed that the kernel of the homomorphism  is finitely normally generated.  Here we show that the kernel is {\em not} finitely generated.  The proof is an elementary application of covering space theory and the  geometry of the euclidean or hyperbolic plane.  \end{abstract}

\maketitle

\section{Introduction} \label{sect:intro}

Let  $X = \{x_1, x_2, ..., x_n\}, n \geq 2$ be a set of $n$ points in a closed surface $M$ that is not the sphere or projective plane.  Let $F_nM$ be the space of all embeddings of $X$ in $M$, with $\{x^0: X \to M\} \in F_nM$ the inclusion.  Then $\pi_1(F_nM, x^0)$ is commonly called the pure braid group $P_n(M)$ of $M$ on $n$ strands.  Goldberg \cite{Gol} defines a natural surjection 
 \[g_*: P_n(M) \to \Pi_{i=1}^n \pi_1(M, x_i)\]
 and shows that 
 the kernel $K$ of $g_*$ is the normal closure of $P_n(D)$,  $D \subset M$ any disk containing $X$ in its interior. (See also \cite[Theorem 1.7]{Bi}.) The map $g_*$ can be thought of as induced by the inclusion $(F_nM, x^0) \to  (M^n, x^0)$.
  
 \begin{cor}[Goldberg] \label{cor:goldberg}  $K$ is finitely {\textbf normally} generated.
 \end{cor}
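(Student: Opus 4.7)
The plan is to read this as an essentially immediate consequence of the statement that precedes it, in the paragraph just above: Goldberg's theorem already asserts that $K$ equals the normal closure in $P_n(M)$ of the subgroup $P_n(D)$, where $D \subset M$ is any disk containing $X$ in its interior. Granting this, the corollary reduces to the purely planar statement that $P_n(D)$ is a finitely generated group; for if $P_n(D) = \langle s_1, \dots, s_k \rangle$ as an abstract group, then the images of $s_1, \dots, s_k$ under the inclusion-induced map $P_n(D) \to P_n(M)$ normally generate $K$.

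So the first step is to recall that $P_n(D)$ is the classical Artin pure braid group on $n$ strands, and that this group is finitely generated. An explicit finite generating set is given by the standard elements $A_{ij}$, $1 \le i < j \le n$, where $A_{ij}$ is the pure braid in $D$ whose $j$-th strand makes a small loop once around the $i$-th strand, all other strands staying fixed. This is classical (Artin), and gives $\binom{n}{2}$ generators.

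The second step is simply to assemble: viewing these $A_{ij}$ inside $P_n(M)$ via the inclusion $D \hookrightarrow M$, they lie in $K$ (as they clearly lie in $P_n(D) \subset K$), and their normal closure in $P_n(M)$ is the normal closure of all of $P_n(D)$, which by Goldberg is $K$. Hence $K$ is normally generated in $P_n(M)$ by $\binom{n}{2}$ elements.

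I expect no serious obstacle here; the work of the corollary is entirely in the theorem quoted from Goldberg above it, together with the standard finite generating set for the pure braid group of a disk. The only mild subtlety worth flagging is that the inclusion $P_n(D) \to P_n(M)$ is not in general injective (for instance, on a torus it has nontrivial kernel), so the $A_{ij}$ may satisfy new relations in $P_n(M)$; but this is harmless for the statement, since a generating set for a subgroup always normally generates its normal closure regardless of injectivity.
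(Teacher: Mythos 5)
Your proof is correct and is exactly the argument the paper intends: the corollary is stated without explicit proof because it follows immediately from Goldberg's theorem quoted just above it (that $K$ is the normal closure of $P_n(D)$) together with the classical fact that the pure braid group of a disk is finitely generated, e.g.\ by the Artin generators $A_{ij}$. Your remark that non-injectivity of $P_n(D) \to P_n(M)$ is harmless is accurate and a nice touch, though not needed for the conclusion.
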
 
 
 In contrast, we will here show
 
 \begin{thm} \label{thm:main} $K$ is not finitely generated.
 \end{thm}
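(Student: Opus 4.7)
The strategy is to identify $K$ with the fundamental group of an explicit covering space of $F_nM$, and then exhibit a surjection from $K$ onto $F_\infty$, the free group of countably infinite rank.

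Let $p\colon \tM \to M$ be the universal cover. Since $M$ is neither $S^2$ nor $\mathbb{RP}^2$, $\tM$ is homeomorphic to $\Rrr^2$ (being either $\Rrr^2$ or the hyperbolic plane), and the deck group $\pi_1(M)$ acts freely and properly discontinuously. Because $g_*$ is induced by $F_nM \hookrightarrow M^n$, the kernel $K$ corresponds to the pullback of $\tM^n \to M^n$ along this inclusion; that is, $K \cong \pi_1(\widetilde{F_nM})$, where
\[
\widetilde{F_nM} \;=\; \bigl\{(\tilde m_1,\ldots,\tilde m_n)\in \tM^n : p(\tilde m_i)\ne p(\tilde m_j)\text{ for }i\ne j\bigr\}.
\]

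I would first settle the case $n=2$. The first-coordinate projection $\widetilde{F_2M}\to\tM$ is a locally trivial fibration; its fiber above $\tilde m_1$ is $\tM\setminus p^{-1}(p(\tilde m_1))$, a plane with a countable closed discrete subset removed, hence homotopy equivalent to an infinite wedge of circles, with fundamental group $F_\infty$. Since $\tM$ is contractible, the homotopy long exact sequence of the fibration gives $\pi_1(\widetilde{F_2M})\cong F_\infty$.

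For $n\ge 3$ I would reduce to the $n=2$ case via the forgetful map $q\colon\widetilde{F_nM}\to\widetilde{F_2M}$, $(\tilde m_1,\ldots,\tilde m_n)\mapsto(\tilde m_1,\tilde m_2)$. Let $U\subset\widetilde{F_2M}$ be the open subset where $p(\tilde m_1), p(\tilde m_2)\notin\{x_3,\ldots,x_n\}$. The assignment $(\tilde m_1,\tilde m_2)\mapsto(\tilde m_1,\tilde m_2,\tilde x_3,\ldots,\tilde x_n)$ defines a continuous section $s\colon U\to\widetilde{F_nM}$ of $q$ over $U$. The complement $\widetilde{F_2M}\setminus U$ is a locally finite union of codimension-$2$ smooth submanifolds of the $4$-manifold $\widetilde{F_2M}$, so a standard general-position argument shows that every loop in $\widetilde{F_2M}$ is homotopic (within $\widetilde{F_2M}$) to a loop in $U$, yielding a surjection $\pi_1(U)\twoheadrightarrow\pi_1(\widetilde{F_2M})$. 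Composition with $s_*$ then shows $q_*\colon K \to F_\infty$ is surjective, and therefore $K$ is not finitely generated.

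The main technical obstacle is the general-position step: because there are infinitely many codimension-$2$ forbidden strata, one must invoke local finiteness (which follows from discreteness of each $p^{-1}(x_i)$ in $\tM$) in order to push any fixed loop off all strata simultaneously in a single small homotopy. Everything else is routine covering-space bookkeeping together with a Fadell--Neuwirth-type fibration argument lifted to the universal cover.
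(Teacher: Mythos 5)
Your argument is correct, but it takes a genuinely different route from the paper. You identify $K$ with $\pi_1$ of the pullback cover $\widetilde{F_nM}\subset \tM^n$, compute $\pi_1(\widetilde{F_2M})\cong F_\infty$ via a Fadell--Neuwirth-type fibration over the contractible base $\tM$ whose fiber is a plane minus an infinite closed discrete set, and then for $n\ge 3$ use the partial section of the forgetful map together with a codimension-two general-position argument to produce a surjection $K\twoheadrightarrow F_\infty$. This is essentially the ``general theory of covering spaces and subspace arrangements'' proof that the introduction attributes to the referee, and it is close in spirit to the structure theorem of Gonz\'alez-Meneses--Paris cited as \cite{GP}; it also yields strictly more than the theorem asks (for $n=2$ it computes $K\cong F_\infty$ exactly, and in general it exhibits an explicit free quotient of infinite rank). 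The paper instead argues metrically: it defines the \emph{breadth} $\nu([b])$ of a kernel class, measured in the constant-curvature metric on $\tM$, proves $\nu([b\cdot b'])\le\max\{\nu([b]),\nu([b'])\}$ so that a finitely generated $K$ would have uniformly bounded breadth, and then constructs classes of arbitrarily large breadth via a winding-number estimate (Proposition \ref{prop:windtrivial}) applied to a $2$-braid supported in a thin neighborhood of a long geodesic segment joining distant lifts of $D$. The paper's route is deliberately elementary --- its only inputs are covering-space lifting and plane geometry --- and it produces a quasi-norm-like invariant on $K$ that the author hopes to transplant to the Goeritz group; your route is shorter if one is willing to quote local triviality of the fibration and transversality, and it pins down the group-theoretic structure of $K$ rather than only refuting finite generation. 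The steps you should be sure to justify are: local triviality of $\widetilde{F_2M}\to\tM$ (the usual Fadell--Neuwirth trivialization lifts equivariantly through $p$), connectivity of $\widetilde{F_nM}$ (equivalent to surjectivity of $g_*$, which Goldberg proves), and the observation that pushing a \emph{loop} (not a disk) off a locally finite union of codimension-two strata suffices, since only surjectivity of $\pi_1(U)\to\pi_1(\widetilde{F_2M})$ is needed.
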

 
Proving that a group is not finitely generated can be difficult.  See \cite{CF} for an example reminiscent of Theorem \ref{thm:main}, and  \cite{BF} for an example of a sophisticated proof gone wrong.  The proof of Theorem \ref{thm:main} given here is elementary, requiring only covering space theory and simple euclidean or hyperbolic geometry in the plane.  Indeed, the proof can be viewed as a re-imagining of Goldberg's 1973 proof in this journal, one in which it is the geometry rather than just the topology of the universal cover $\tM$ of $M$ that is invoked.  In view of this connection, we adopt much of the notation of Goldberg's proof, at least where it does not conflict with current sensibilities, and restrict the theorem, as Goldberg does, to closed surfaces.  (Requiring only that $M$ have infinite fundamental group appears to suffice.)  But we also note that Theorem \ref{thm:main} follows immediately from the more sophisticated structure theorem for $K$ derived in \cite[Proposition 2.4]{GP}  (see also \cite{Xi}) or, as the referee points out, can be derived from the general theory of covering spaces and subspace arrangements.  
\smallbreak

{\textbf Remarks:} The author was drawn to the question by efforts to prove that the Goeritz group  \cite{Sc} for Heegaard genus $\geq 4$ is not finitely generated.  Such a proof would allow us to conclude that  the Powell Conjecture is true only for genus $\leq 3$ \cite{FS}. 

\section{Winding number in $P_2(\Rrr^2)$}

The pure 2-braid group $P_2(\Rrr^2)$ of the plane is isomorphic to the integers:  the isomorphism $\omega: P_2(\Rrr^2) \to \Zzz$ assigns to a braid class the winding number of any braid in that braid class.  When $\Rrr^2$ is endowed with either the euclidean or hyperbolic metric there is a useful geometric definition of the winding number.  Let $b: I \to F_2\Rrr^2$ be a pure 2-braid in either the euclidean or hyperbolic plane.  For each $t \in I$ consider the geodesic ray $r(t)$ from $b(t)(x_1)$ to $b(t)(x_2)$.  In the Euclidean case, $r(t)$ defines an angle $w(t) \in S^1$; in the hyperbolic case the ray $r(t)$ determines a point $w(t) \in S^1$, viewed as the circle at infinity in the Poincar\'e disk model of the hyperbolic plane.  Thus in either case the pure braid $b(t)$ determines a map $w: I \to S^1$, with $w(0) = w(1)$.  Since $w(0) = w(1)$, $w$ determines a map $\overline{w}: S^1 \to S^1$.  

With this as background we have:

\begin{defin} \label{defin:winding} The winding number 
\[ 
\omega([b]) \in \Zzz
\]
is the degree of the map $\overline{w}$.
\end{defin}

Using this definition the following Proposition is elementary. 

\begin{prop}  \label{prop:windtrivial} Let $b: I \to F_2\Rrr^2$ be a pure 2-braid and  let $d: \Rrr^2 \times \Rrr^2 \to [0, \infty)$ be either the euclidean or hyperbolic metric on $\Rrr^2$.  Suppose, for each $t \in I$ and each $i = 1, 2$, $d(x_i, b(t)(x_i))  < d(x_1, x_2)/2$.  

Then $\omega([b]) = 0$.
\end{prop}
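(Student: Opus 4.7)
The plan is to show that the map $\overline{w} : S^1 \to S^1$ factors through a simply connected space, so it is null-homotopic and hence of degree zero. Set $R := d(x_1, x_2)$ and let $B_i := \{p \in \Rrr^2 : d(x_i, p) < R/2\}$ for $i = 1, 2$. The hypothesis gives $b(t)(x_i) \in B_i$ for each $t \in I$, while the triangle inequality forces $B_1 \cap B_2 = \emptyset$. In both the Euclidean and the hyperbolic plane a metric ball is geodesically convex, hence contractible, so $B_1 \times B_2$ is simply connected.

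Next, since $B_1$ and $B_2$ are disjoint, every pair $(p,q) \in B_1 \times B_2$ has $p \neq q$, so the geodesic ray from $p$ through $q$ determines, exactly as in Definition~\ref{defin:winding}, a point $W(p,q) \in S^1$. The resulting map $W : B_1 \times B_2 \to S^1$ is continuous, because a geodesic ray varies continuously with its starting point and direction, as does the angle (Euclidean case) or ideal endpoint (hyperbolic case) attached to it. By construction the map $w : I \to S^1$ factors as $W \circ \beta$, where $\beta(t) := (b(t)(x_1), b(t)(x_2)) \in B_1 \times B_2$.

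Passing to the quotient $S^1 = I/(0 \sim 1)$, the induced loop $\overline{\beta} : S^1 \to B_1 \times B_2$ is null-homotopic because its target is simply connected; therefore $\overline{w} = W \circ \overline{\beta}$ is null-homotopic as well, and so $\omega([b]) = \deg \overline{w} = 0$. I do not anticipate a serious obstacle: the only items requiring routine verification are the geodesic convexity of hyperbolic metric balls (standard in the Poincar\'e disk model) and the continuous dependence of $W(p,q)$ on $(p,q)$, both of which are straightforward.
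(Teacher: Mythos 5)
Your proof is correct, but it takes a different route from the paper's. The paper argues by non-surjectivity: it takes $L$ to be the geodesic that perpendicularly bisects the segment from $x_1$ to $x_2$, observes that every point of $L$ is at distance at least $d(x_1,x_2)/2$ from each $x_i$, so the two moving points stay on opposite sides of $L$ for all $t$; consequently the ray $r(t)$ can never point in the direction (Euclidean case) or toward the ideal endpoint (hyperbolic case) of either end of $L$, so $\overline{w}$ misses at least two points of $S^1$ and hence has degree zero. You instead factor $\overline{w}$ through $B_1 \times B_2$, where $B_i$ is the open ball of radius $d(x_1,x_2)/2$ about $x_i$: the balls are disjoint by the triangle inequality, each is contractible, so the loop $\overline{\beta}$ is null-homotopic and therefore so is $\overline{w} = W \circ \overline{\beta}$. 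Both arguments are elementary and complete. The paper's version buys an utterly concrete picture --- you can name the two directions that $\overline{w}$ misses --- and needs nothing beyond ``a non-surjective circle map has degree zero.'' Your version buys robustness: it never identifies a missed direction, only that the direction map $W$ is defined and continuous wherever $p \neq q$, so it would survive essentially unchanged for any metric in which small balls are contractible and disjoint; the price is the (routine but necessary) verification that $W$ is continuous and that the balls are contractible. Either proof serves the rest of the paper equally well, since all that is used downstream (Lemma \ref{lemma:bigbreadth}) is the contrapositive.
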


\begin{proof}  Let $L$ be the geodesic in $\Rrr^2$ that orthogonally bisects the geodesic segment connecting the points $x_1$ and $x_2$.  Then each point $\ell \in L$ is equidistant from $x_1$ and $x_2$,  and $d(\ell, x_i) \geq d(x_1, x_2)/2$ for $i = 1, 2$.  So the hypothesis implies that, for all $t$, the two points $b(t)(x_1)$ and $b(t)(x_2)$ lie on opposite sides of $L$.  See Figure \ref{fig:Gold1}. In particular, the geodesic ray from $b(t)(x_1)$ to $b(t)(x_2)$ never achieves the angle (when $d$ is euclidean) or the limit point at $\infty$ in $S^1$ (when $d$ is hyperbolic) of any infinite ray in the line $L$.  That is, the corresponding map $\overline{w}: S^1 \to S^1$ is not surjective, so $\overline{w}$ has trivial degree, completing the proof.
\end{proof}

    
 \begin{figure}[H]
  \centering
   \begin{tikzpicture}
   \usetikzlibrary {arrows.meta} 
\draw (0,-2.5) -- (0,2.5);
\draw[dashed] (0,-2.5) -- (0,-3.5);
\draw[dashed] (0,2.5) -- (0,3.5);
\draw[loosely dashed]  (0,0) circle (3.5); 
\draw  (-1.2,0) circle (1.2); 
\draw  (1.2,0) circle (1.2); 
 \draw [arrows = {-Latex[width=8pt, length=8pt]}] (-.8,0.1) arc [start angle=-80, end angle=-40, radius=6]; 
  \filldraw [gray] (-1.4,0) circle [radius=1.5pt]; 
 \filldraw [gray] (1.4,0) circle [radius=1.5pt];  
 \filldraw [gray] (-.8,0.1) circle [radius=1.5pt]; 
 \filldraw [gray] (.8,0.6) circle [radius=1.5pt];  
  \node at (-0.8,0.4) {$b(t)(x_1)$};
 \node at (1.6,0.6) {$b(t)(x_2)$};
   \node at (-1.4,-0.2) {$x_1$};
  \node at (1.6,-0.2) {$x_2$};
  \node at (2.0,1.7) {$r(t)$};
   \node at (3.2,2.2) {$w(t)$};
 \node at (-0.2,-1.5) {$L$};
\node at (0,-3.8) {};
\end{tikzpicture}
   \caption{The map $w: I \to S^1$ (hyperbolic case)}
    \label{fig:Gold1}
\end{figure}

 \section{The breadth of braids and braid classes}
 
For $M$ a closed surface not the sphere or projective plane, fix a constant curvature  metric on $M$, euclidean if $\chi(M) = 0$, hyperbolic if $\chi(M) <0$.  Lift that metric to the universal cover $\tM$ of $M$, giving $\tM$ the structure of either the euclidean or hyperbolic plane.  Denote by $d: \tM \times \tM \to [0, \infty)$ this standard metric. 
With no loss, assume the points $X = \{x_1, x_2, ..., x_n\}$ lie in the interior of an evenly covered closed disk $D \subset M$ of radius $\rho >0 $. 
For each $x_i \neq x_j \in X$ there is a natural isomorphism $\pi_1(M, x_i) \cong \pi_1(M, x_j)$ obtained by conjugating a loop at $x_i$ by a path in $D$ between $x_i$ and $x_j$.  When the choice of base point $x_i \in D$ is unimportant we will use the generic notation $\pi_1(M, x_\bullet)$.\

For $p: \tM \to M$ the covering projection fix a component $\tD$  of $p^{-1}(D)$ and, for $1 \leq i \leq n$ let $\tx_i$ denote the lift of $x_i$ that lies in $\tD$ and let $\tX = \{\tx_1, \tx_2, ..., \tx_n\}$.  For each $\aaa \in \pi_1(M,x_\bullet )$ let $\tD(\aaa)$ denote the image of $\tD$ under the covering translation of $\tM$ determined by $\aaa$.  Similarly, let $\tx_i(\aaa)$ denote the lift of $x_i$ that lies in $\tD(\aaa)$.  In particular, for $e \in \pi_1(M,x_\bullet)$ the identity, $\tD = \tD(e)$ and $\tx_i(e) = \tx_i, 1 \leq i \leq n$.

A {\em pure braid of $n$ strands}  in $M$  is a map $b: I \to F_nM$ so that $b(0) = b(1) = x^0$.  Denote by $[b]$ the class of $b$ in $\pi_1(F_nM, x^0)$.
For each $1 \leq i \leq n$ the path $b(t)(x_i), t \in I$ is a loop in $M$ based at $x_i$.  Denote by $\tb_i: I \to \tM$ the path that is the lift of the loop $b(t)(x_i)$ for which $\tb_i(0) \in \tD$.  Then $\tb_i(1)$ lies in a lift $\tD(\aaa_i)$ of $D$ in $\tM$, for some $\aaa_i \in \pi_1(M, x_i)$; Goldberg defines 
\[g_*([b]) = (\aaa_1, ..., \aaa_n) \in \Pi_{i=1}^n \pi_1(M, x_i).
\]
In particular, if the braid class $[b] \in K = ker(g_*)$ then each $\aaa_i = e$ so each path $\tb_i$ is a loop in $\tM$.  That is, $\tb = (\tb_1, ..., \tb_n) \in F_n\tM$ is itself a braid on the points $\tX \subset \tD \subset \tM$.  By the covering homotopy property, the class $[\tb] \in P_n(\tM) = \pi_1(F_n\tM, \tx^0)$ is well defined.  (Here $\tx^0$ denotes the inclusion of $\tX$ into $\tD \subset \tM$.)

\begin{defin} For any braid $b$ such that $[b] \in K \subset P_n(M)$ let $\tb:I \to F_n\tM$ be the braid in $\tM$ just defined.  Then the {\em breadth} $\nu(b) \in [0, \infty)$ of $b$ is given by
\[
\nu(b) = \max_{i\in \{1,...,n\}} \max_{t \in I}d(\tx_i, \tb_i(t)) 
\]
\end{defin}

Here are two obvious properties:

\begin{lemma}  For any braid $b$ such that $[b] \in K \subset P_n(M)$, if $\nu(b) = 0$ then $b$ is the constant braid in $M$.
\end{lemma}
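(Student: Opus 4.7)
The plan is to unwind the definition of breadth and invoke the unique lifting property. Assume $\nu(b) = 0$. By definition of the max, this forces $d(\tx_i, \tb_i(t)) = 0$ for every $i \in \{1, \ldots, n\}$ and every $t \in I$. Since $d$ is a genuine metric on $\tM$, this means $\tb_i(t) = \tx_i$ for all such $i, t$; that is, each path $\tb_i : I \to \tM$ is the constant path at $\tx_i$.

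Now recall how $\tb_i$ was constructed: it is the unique lift, starting at $\tx_i \in \tD$, of the based loop $t \mapsto b(t)(x_i)$ in $M$. Projecting the constant lift back down via $p$, we get $b(t)(x_i) = p(\tb_i(t)) = p(\tx_i) = x_i$ for every $t \in I$ and every $i$. Thus $b(t) = x^0$ for every $t$, so $b$ is the constant braid in $M$, as claimed. The only content here is the observation that the zero set of a metric is the diagonal, together with the uniqueness of path lifts — no real obstacle arises.
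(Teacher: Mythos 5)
Your argument is correct and matches the paper's proof: breadth zero forces each lifted strand $\tb_i$ to be constant at $\tx_i$, hence the lifted braid $\tb$ is constant, and projecting via $p$ shows $b$ is the constant braid in $M$. No gap here.
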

\begin{proof}  If $\nu(b) = 0$ then for each $i$ and every $t \in I$, $\tb(t)(\tx_i) = \tx_i$.  This implies that for every $t$, $\tb(t)$ is just the inclusion, so $\tb$ is the constant braid in $\tM$.  Then its projection $b$ is the constant braid in $M$, as claimed.  
\end{proof}

\begin{lemma}  \label{lemma:braidproduct1} Suppose $b$ and $b'$ are two braids so that $[b], [b'] \in K$ and $b \cdot b'$ is their braid product.  Then 
\[ \nu(b \cdot b') = \max\{\nu(b), \nu(b')\} \]
\end{lemma}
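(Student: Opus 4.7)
The plan is to observe that because $[b], [b'] \in K$, the lift of the braid product $b \cdot b'$ is itself the braid product $\tb \cdot \tb'$ in $\tM$, strand by strand. Once this is established, the breadth computation reduces to the elementary fact that the supremum of a function along a concatenation of two paths equals the maximum of the suprema along the two pieces.

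First I would fix an index $1 \leq i \leq n$ and analyze the lift of the loop $(b \cdot b')(t)(x_i)$. By definition of braid product this loop traverses $b(2t)(x_i)$ on $[0, 1/2]$ and $b'(2t-1)(x_i)$ on $[1/2, 1]$, and its unique lift to $\tM$ starting at $\tx_i$ is $\tb_i(2t)$ on the first half. Here the hypothesis $[b] \in K$ enters crucially: it forces $\tb_i(1) = \tx_i$, so the two lifted halves can be concatenated continuously. The continuation on $[1/2, 1]$ is then the unique lift of $b'(2t-1)(x_i)$ starting at $\tx_i$, which by definition is $\tb'_i(2t-1)$ (this again uses that $[b'] \in K$, so that $\tb'_i$ is a genuine loop at $\tx_i$). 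Thus the lift of the $i$th strand of $b \cdot b'$ is the concatenation $\tb_i \cdot \tb'_i$.

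Second, for this fixed $i$, the image of the function $t \mapsto d(\tx_i, \widetilde{b \cdot b'}_i(t))$ is precisely the union of the images of $t \mapsto d(\tx_i, \tb_i(t))$ and $t \mapsto d(\tx_i, \tb'_i(t))$. Hence
\[
\max_{t \in I} d\bigl(\tx_i, \widetilde{b \cdot b'}_i(t)\bigr) \;=\; \max\Bigl\{\max_{t \in I} d(\tx_i, \tb_i(t)),\ \max_{t \in I} d(\tx_i, \tb'_i(t))\Bigr\}.
\]
Taking the maximum over $i \in \{1, \dots, n\}$ and interchanging the two outer maxima yields $\nu(b \cdot b') = \max\{\nu(b), \nu(b')\}$.

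There is no real obstacle here; the only substantive content is that $[b], [b'] \in K$ ensures each lifted half-path is a loop based at $\tx_i$, so that the concatenation of lifts is the lift of the concatenation. Everything else is a formal manipulation of maxima. The lemma will play its expected role in the sequel by letting one bound the breadth of a product of many short braids by the worst individual breadth, rather than by a sum.
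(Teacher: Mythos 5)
Your proposal is correct and follows essentially the same route as the paper: identify the lift of $b \cdot b'$ with the concatenation $\tb \cdot \tb'$ (which requires $[b], [b'] \in K$ so that each lifted strand is a loop at $\tx_i$), then observe that the distance function on the concatenation takes exactly the union of the values taken on the two pieces, so the maxima agree. The paper states these two steps more tersely; you have merely supplied the justification for the first one.
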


\begin{proof}  Let $\tb$ and $\tb'$ be the lifts of $b$ and $b'$ respectively.  Then the braid product $\tb \cdot \tb'$ is the lift of $b \cdot b'$.  Moreover for each $i$, the function $d(\tx_i, (\tb \cdot \tb')(t)(\tx_i))$ takes on the union of the values taken by $d(\tx_i, \tb(t)(\tx_i))$ and $d(\tx_i, \tb'(t)(\tx_i))$.  So the maximum over all values is the same, as claimed. 
\end{proof}

Similarly

\begin{defin} For any braid class $[b] \in K \subset P_n(M)$ 
define the {\em breadth} $\nu([b]) \in [0, \infty)$ by
\[
\nu([b]) = \inf_{b \in [b]} \nu(b).
\]
\end{defin}

Again this leads to two obvious lemmas:

\begin{lemma}  For any braid class $[b] \in K \subset P_n(M)$, if $\nu([b]) = 0$ then $[b]$ is the identity in $P_n(M)$.
\end{lemma}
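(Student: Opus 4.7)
The plan is to exploit the infimum definition of $\nu([b])$ to produce a representative braid whose lifted strands are geometrically so close to the basepoints that a straight-line geodesic homotopy contracts the braid to the constant braid through embeddings, first in the universal cover $\tM$, then pushed down to $M$ via the covering projection.

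First I would fix a positive radius $r$ bounded above by both the packing radius $\tfrac{1}{2}\min_{i\neq j}d(\tx_i,\tx_j)$ of $\tX$ in $\tM$ and the distance from each $\tx_i$ to the boundary of $\tD$. Since $\nu([b]) = 0$, there exists some representative $b \in [b]$ with $\nu(b) < r$. For this $b$, every lifted strand $\tb_i:I \to \tM$ is confined to the open ball $B(\tx_i, r)$, and by the choice of $r$ these $n$ balls are pairwise disjoint and all contained in $\tD$.

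Next, I would define a homotopy $H:I \times I \to F_n\tM$ by setting $H(s,t)(\tx_i)$ equal to the point on the unique minimizing geodesic from $\tx_i$ to $\tb_i(t)$ at parameter $(1-s)$. Each intermediate point remains inside $B(\tx_i,r)$, so disjointness of the balls guarantees that $H(s,\cdot)$ actually takes values in $F_n\tM$ for every $s$. Moreover $H(0,\cdot) = \tb$, $H(1,\cdot)$ is the constant braid at $\tx^0$, and $H(s,0) = H(s,1) = \tx^0$ because $[b] \in K$ forces $\tb_i(0) = \tb_i(1) = \tx_i$. Since $p$ restricts to a homeomorphism on $\tD$ and carries the disjoint balls $B(\tx_i,r) \subset \tD$ to disjoint balls in $D \subset M$, the composition $p \circ H$ is a based path-homotopy in $F_nM$ between $b$ and the constant braid $x^0$, so $[b]$ is the identity in $P_n(M)$.

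The one point that genuinely requires attention is confirming that the straight-line homotopy remains in the configuration space $F_n\tM$ rather than in $\tM^n$; this is precisely what dictates the choice of $r$ as a simultaneous upper bound on half the pairwise distances of the $\tx_i$ and on their distances to $\bdd\tD$. Everything else is bookkeeping using standard facts about covering projections and the fact that both the euclidean and hyperbolic planes are uniquely geodesic.
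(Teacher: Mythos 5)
Your proof is correct and follows essentially the same route as the paper: choose a representative with $\nu(b)$ smaller than a radius guaranteeing disjoint disks about the $\tx_i$ inside $\tD$, contract each strand radially to its center within its own disk (the paper phrases this as a deformation retraction of an $\eee$-neighborhood of $X$ onto $X$), and project down to $M$. Your version merely spells out more explicitly why the contraction stays in $F_n\tM$ and preserves basepoints.
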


\begin{proof}  Let $\eee > 0$ be so small that an $\eee$-neighborhood of $X \subset M$ (that is, a collection of disjoint disks centered on the $x_i$, each of radius $\eee$) deformation retracts to $X$. Then a lift $\tilde{\eta}$ of this neighborhood to $\tD \subset \tM$ consists of a disjoint collection of $\eee$-disks centered on the $\tx_i$. If $\nu([b]) = 0$ then there must be a braid $b \in [b]$ such that $\nu(b) < \eee$.  That is, for each $i \in \{1, ..., n\}$ and $t \in I$, $d(\tx_i, \tb_i(t)) < \eee$. Then the deformation retraction of $\tilde{\eta}$ to $\tX$ homotopes $\tb$ to the constant braid in $\tM$ and its projection homotopes $b$ to the constant braid in $M$, as claimed.
\end{proof}

\begin{lemma}  \label{lemma:braidproduct2} Suppose $b$ and $b'$ are two braids so that $[b], [b'] \in K$.  Then 
\[ \nu([b \cdot b']) \leq \max\{\nu([b]), \nu([b'])\} \]
\end{lemma}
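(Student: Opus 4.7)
The plan is to reduce the statement about braid classes to the statement about individual braids proved in Lemma \ref{lemma:braidproduct1}, using the definition of $\nu([b])$ as an infimum over representatives.

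First I would fix an arbitrary $\eee > 0$ and, using the definition of $\nu$ on classes, choose representatives $b_\eee \in [b]$ and $b'_\eee \in [b']$ satisfying $\nu(b_\eee) < \nu([b]) + \eee$ and $\nu(b'_\eee) < \nu([b']) + \eee$. The key structural fact to invoke is that the braid product $b_\eee \cdot b'_\eee$ is a representative of the class $[b]\cdot[b'] = [b \cdot b']$ in $P_n(M)$, so $\nu([b \cdot b'])$ is bounded above by $\nu(b_\eee \cdot b'_\eee)$.

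Next I would apply Lemma \ref{lemma:braidproduct1} directly to the chosen representatives, which gives the exact identity
\[
\nu(b_\eee \cdot b'_\eee) = \max\{\nu(b_\eee),\, \nu(b'_\eee)\} < \max\{\nu([b]),\, \nu([b'])\} + \eee.
\]
Combining these two inequalities,
\[
\nu([b \cdot b']) \leq \nu(b_\eee \cdot b'_\eee) < \max\{\nu([b]),\, \nu([b'])\} + \eee.
\]
Since $\eee > 0$ was arbitrary, the desired inequality $\nu([b \cdot b']) \leq \max\{\nu([b]), \nu([b'])\}$ follows.

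There is no serious obstacle: the only subtlety worth stating explicitly is that the class $[b]\cdot[b']$ really is represented by the concatenation $b_\eee \cdot b'_\eee$, which in turn uses that if $[b] \in K$ and $b_\eee \in [b]$, then $b_\eee$ is itself a braid whose class lies in $K$ (so the breadth of $b_\eee$ is defined). The inequality can be strict rather than an equality because passing to the infimum over representatives of the product can allow cancellation between the lifts $\tb_\eee$ and $\tb'_\eee$ that is unavailable when the two braids are chosen independently.
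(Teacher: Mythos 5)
Your argument is correct and is essentially identical to the paper's own proof: both choose $\eee$-optimal representatives of each class, apply Lemma \ref{lemma:braidproduct1} to their concatenation, and let $\eee \to 0$. The one point you make explicit that the paper leaves implicit --- that $b_\eee \cdot b'_\eee$ represents $[b \cdot b']$ --- is a worthwhile clarification but not a difference in method.
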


\begin{proof}  For any $\eee > 0$ there are, by definition, representative braids $b \in [b]$ and $b' \in [b']$ so that $\nu(b) < \nu([b])) + \eee$ and $\nu(b') < \nu([b'])) + \eee$.  Then by Lemma \ref{lemma:braidproduct1}, 
\[
\nu([b \cdot b']) \leq \nu(b \cdot b') = \max\{\nu(b), \nu(b')\} \leq \max\{\nu([b]), \nu([b'])\} + \epsilon
\]
Since this is true for all $\eee > 0$ we conclude $\nu([b \cdot b'])  \leq \max\{\nu([b]), \nu([b'])\}$, as required.  
\end{proof}

\begin{cor}   \label{cor:boundedbreadth} Suppose the kernel $K$ is finitely generated.  Then there is a $C >0$ so that for any $[b] \in K$,
$\nu([b]) < C$.  
\end{cor}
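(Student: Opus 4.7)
The plan is to exploit the ``ultrametric-style'' inequality of Lemma \ref{lemma:braidproduct2}, $\nu([b \cdot b']) \leq \max\{\nu([b]), \nu([b'])\}$, together with the observation that breadth is invariant under braid inversion. Let $\{[b_1], \ldots, [b_k]\}$ be a finite generating set for $K$, and set
\[
C \;=\; 1 + \max_{1 \leq i \leq k} \nu([b_i]).
\]
The goal is then to show $\nu([b]) < C$ for every $[b] \in K$.

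The first step I would carry out is to verify the auxiliary fact that $\nu([b]^{-1}) = \nu([b])$. Represent the inverse class by the time-reversed braid $b^{-1}(t) := b(1-t)$. Since $[b^{-1}] \in K$ as well, uniqueness of path-lifts forces each lifted strand of $b^{-1}$ to be the time-reversal of the corresponding $\tb_i$. The set of values $d(\tx_i, \tb_i(t))$ sampled along $t \in I$ is therefore unchanged, so $\nu(b^{-1}) = \nu(b)$ on representatives, and passing to infima yields $\nu([b]^{-1}) = \nu([b])$ on classes.

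The second step is to express an arbitrary $[b] \in K$ as a word $[b] = [b_{i_1}]^{s_1} \cdots [b_{i_m}]^{s_m}$ with $s_j \in \{\pm 1\}$, and then induct on $m$ using Lemma \ref{lemma:braidproduct2}:
\[
\nu([b]) \;\leq\; \max_{1 \leq j \leq m} \nu\bigl([b_{i_j}]^{s_j}\bigr) \;=\; \max_{1 \leq j \leq m} \nu([b_{i_j}]) \;\leq\; \max_{1 \leq i \leq k} \nu([b_i]) \;<\; C.
\]

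I do not foresee a genuine obstacle: once the ultrametric lemma is in hand, the argument reduces to inversion invariance plus a routine induction on word length. The only step meriting explicit comment is the inversion invariance, which follows immediately from uniqueness of lifts. Note also that finite generation is used in its plain group-theoretic sense — no conjugates enter — which is exactly what makes this corollary, combined with the eventual exhibition of braid classes of arbitrarily large breadth, contradict the hypothesis of Theorem \ref{thm:main}.
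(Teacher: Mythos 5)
Your proof is correct and follows essentially the same route as the paper: write an arbitrary element of $K$ as a word in the generators and apply Lemma \ref{lemma:braidproduct2} inductively. The one point where you go beyond the paper is your explicit verification that $\nu([b]^{-1}) = \nu([b])$ via time-reversal of the lifted strands --- a detail the paper's proof silently absorbs into the phrase ``braid product of copies of the $[b_j]$'' --- and that verification is sound.
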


\begin{proof}   Suppose $[b_1], ..., [b_m]$ are a finite set of generators for $K$.  Then any braid class $[b] \in K$ can be written as the braid product of copies of the $[b_j], 1 \leq j \leq m$.  It follows then from \ref{lemma:braidproduct2} that $\nu([b]) \leq \max_{1 \leq j \leq m} \nu([b_j])$, as claimed.  
\end{proof}

\section{Braid breadth is unlimited}

Following Corollary \ref{cor:boundedbreadth}, to prove Theorem \ref{thm:main} it suffices to prove the following proposition

\begin{prop} \label{prop:mainprop}   For any $C > 0$ there is a braid class $[b] \in P_n(M)$ so that $\nu([b]) \geq C$.
\end{prop}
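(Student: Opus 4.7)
The plan is, for any $C>0$, to construct a class $[b]\in K$ together with a deck transformation $\aaa\in\pi_1(M)$ such that the pair of loops $(\tb_1,\aaa\cdot\tb_2)$ in $\tM$ has nonzero winding; Proposition~\ref{prop:windtrivial} will then force $\nu([b])\geq C$. I first argue the case $n=2$; for $n>2$ one appends strands constant at $x_3,\dots,x_n$ chosen off the paths constructed below, and the rest of the argument goes through unchanged.

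Step 1 (the invariant $W_\aaa$). For $[b]\in K\subset P_2(M)$ and $\aaa\in\pi_1(M,x_\bullet)$, consider the pair $(\tb_1,\aaa\cdot\tb_2):I\to F_2\tM$ based at $(\tx_1,\tx_2(\aaa))$. A coincidence $\tb_1(t)=\aaa\cdot\tb_2(t)$ would project to the forbidden strand collision $b_1(t)=b_2(t)$ in $M$, so this pair is genuinely in $F_2\tM$, and the same disjointness argument applied throughout any homotopy $b\simeq b'$ rel $x^0$ shows that the class
\[
W_\aaa([b]):=\omega\bigl([(\tb_1,\aaa\cdot\tb_2)]\bigr)\in\Zzz
\]
depends only on $[b]$. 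If $W_\aaa([b])\neq 0$, Proposition~\ref{prop:windtrivial} applied to this 2-braid (with basepoints $\tx_1,\tx_2(\aaa)$) forces, for every representative $b$ and some $t\in I$, either $d(\tx_1,\tb_1(t))\geq d(\tx_1,\tx_2(\aaa))/2$ or $d(\tx_2(\aaa),\aaa\cdot\tb_2(t))=d(\tx_2,\tb_2(t))\geq d(\tx_1,\tx_2(\aaa))/2$, the last equality because $\aaa$ is an isometry. In both cases $\nu(b)\geq\tfrac12 d(\tx_1,\tx_2(\aaa))$, so taking the infimum gives $\nu([b])\geq\tfrac12 d(\tx_1,\tx_2(\aaa))$.

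Step 2 (realizing nonzero $W_\aaa$). Since $\pi_1(M)$ is infinite and acts properly discontinuously by isometries on $\tM$, pick $\aaa\in\pi_1(M,x_1)$ with $d(\tx_1,\tx_2(\aaa))\geq 2C$. Choose a loop $\delta$ at $x_1$ representing $\aaa$ and avoiding $x_2$, and a small simple loop $\mu$ at $x_1$ inside $D$ with $x_2$ in its interior (so $\mu\simeq e$ in $D$ but winds once around $x_2$ in $D\setminus\{x_2\}$). Set $\gamma:=\delta\cdot\mu\cdot\delta^{-1}$, a null-homotopic loop at $x_1$, and let $b$ be the 2-braid whose first strand traces $\gamma$ while the second remains constant at $x_2$; then $g_*([b])=(e,e)$, so $[b]\in K$. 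The lift $\tilde\gamma=\tilde\delta\cdot\tilde\mu\cdot\tilde\delta^{-1}$ is, in $\tM\setminus\{\tx_2(\aaa)\}$, freely homotopic to its middle factor $\tilde\mu$---a simple loop in $\tD(\aaa)$ enclosing $\tx_2(\aaa)$---so winds once around $\tx_2(\aaa)$. Since $\aaa\cdot\tb_2$ is the constant loop at $\tx_2(\aaa)$, this gives $W_\aaa([b])=1$ and hence $\nu([b])\geq C$ by Step 1.

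The main obstacle is the well-definedness of $W_\aaa$. Naive restriction maps such as ``forget all but strands $1$ and $2$'' are \emph{not} invariants on $P_n(M)$, because a braid homotopy can push strand $1$ through a basepoint $x_j$. The decisive point here is that pairing $\tb_1$ with a \emph{deck translate} $\aaa\cdot\tb_2$ converts the only possible degeneration $\tb_1(t)=\aaa\cdot\tb_2(t)$ into a strand collision $b_1(t)=b_2(t)$ in $M$ itself, which is ruled out automatically by the pure-braid condition; consequently the 2-braid $(\tb_1,\aaa\cdot\tb_2)$ survives intact throughout any homotopy of $b$.
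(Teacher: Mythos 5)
Your proof is correct, and its engine is the same as the paper's: apply Proposition \ref{prop:windtrivial} to the lifted pair $(\tb_1, \aaa\cdot\tb_2)$ based at the distant points $\tx_1$ and $\tx_2(\aaa)$, observe that its winding number is a homotopy invariant of $[b]$ because a coincidence $\tb_1(t)=\aaa\cdot\tb_2(t)$ would project to the forbidden collision $b_1(t)=b_2(t)$, and conclude $\nu([b])\geq \tfrac12 d(\tx_1,\tx_2(\aaa))$ whenever that winding number is nonzero. Where you genuinely diverge is in producing a class with nonzero winding number. The paper works geometrically: it proves a density claim for geodesic rays through lifts of $x_1$, extracts a \emph{primitive} $\gamma$ with $\tD(\gamma)$ far from $\tD$, projects the geodesic from $\tx_1$ to $\tx_2(\gamma)$ to an embedded arc $L\subset M$, thickens it to an embedded disk $E$ (using primitivity to keep the lifts of $E$ disjoint), and performs the winding inside $E$. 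You instead take the algebraic route: the first strand traces $\delta\cdot\mu\cdot\delta^{-1}$ with $\mu$ a small loop about $x_2$ and $\delta$ any representative of a far-flung $\aaa$, and read off the winding number of the lift about $\tx_2(\aaa)$ from the fact that conjugation does not change degree in $\pi_1(\Rrr^2\setminus\{pt\})\cong\Zzz$. This buys you real simplification --- you need neither primitivity, nor the density claim, nor embeddedness of $L$ or $E$, and $\aaa$ can be any element with $d(\tx_1,\tx_2(\aaa))\geq 2C$ --- at the cost of making the well-definedness of $W_\aaa$ carry the whole argument, a point you correctly identify and justify. One small wording slip: since $X$ is fixed before the construction, you should say you choose the loops $\delta$ and $\mu$ to miss $x_3,\dots,x_n$ (a generic perturbation), rather than choosing those points off the paths; this is cosmetic and mirrors the paper's own ``generic choice of $X$.''
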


\begin{proof}  Given $C$, we will find 
 a braid $b$ so that $\nu(b) \geq C$, then show that any homotopic braid has the same property.  
 We may as well take $C$ to be very large compared to the radius $\rho$ of $D \subset M$.  

\begin{lemma} \label{lemma:primalpha}  There is a primitive $\gamma \in \pi_1(M, x_\bullet)$ so that in the euclidean or hyperbolic plane $\tM$ the lift $\tD(\gamma)$ of $D$ has distance from $\tD$ greater than $2C$.  
\end{lemma}

\begin{proof}  Consider the family $\{r_{\theta}, \theta \in S^1\}$ of geodesic rays originating at $\tx_1 \subset \tM$, parameterized so that $r_\theta$ leaves the point $\tx_1$ at the angle $\theta$.  
\medbreak

{\em Claim:} The set of angles $\theta$ for which $r_\theta$ passes through a lift of $x_1$ in $\tM$ is dense in $S^1$.

{\em Proof of claim:} If not, there would be a subinterval $(\theta_-, \theta_+) \subset S^1$ such that, for every $ \theta \in (\theta_-, \theta_+)$, the ray $r_\theta$ does not intersect any lift of $x_1$ other than at the ray's origin $\tx_1$.  But this would imply that some fundamental domain of $M$ in $\tM$, perhaps at a great distance from $\tx_1$, would contain no lift of $x_1$.See Figure \ref{fig:Gold2}.  This is absurd and so proves the claim.

    
     \begin{figure}
  \centering
   \begin{tikzpicture}
\usetikzlibrary{arrows,decorations.markings}
\usetikzlibrary {shapes.geometric}
\draw [decoration={markings,mark=at position 1 with
    {\arrow[scale=3,>=stealth]{>}}},postaction={decorate}] (0,0) -- (0,2.5);
\draw [decoration={markings,mark=at position 1 with
    {\arrow[scale=3,>=stealth]{>}}},postaction={decorate}] (0,0) -- (0.9,2.3);
\draw[dashed] (0,2.5) -- (0,3.5);
\draw[dashed] (0.8,2.) -- (1.35,3.17);
 \filldraw [lightgray] (0.7,3.0) ellipse [x radius=8pt, y radius=5pt]; 
\draw[loosely dashed]  (0,0) circle (3.5); 
\node[regular polygon, regular polygon sides=8, draw,
     inner sep=0.7cm] at (0,0) {};  
  \filldraw [gray] (0,0) circle [radius=1.5pt]; 
   \node at (0,-0.3) {$\tx_1$};
 \filldraw [gray] (0,-2.5) circle [radius=1.5pt];  
  \node at (0,-3) {$\tx_1(\alpha)$};
\filldraw [gray] (1.5,-1.5) circle [radius=1.5pt];  
  \node at (1.5,-1.8) {$\tx_1(\beta)$};
  \node at (-0.4,2) {$r(\theta_+)$};
  \node at (1.5,2) {$r(\theta_-)$};
  \node at (0,-3.8) {};
\end{tikzpicture}
   \caption{$\tx_1(\alpha)$, $\tx_1(\beta)$ are sample lifts of $x_1$ for $\alpha, \beta \in \pi_1(M, \bullet)$}
    \label{fig:Gold2}
\end{figure}

%
%

The set of lifts of $x_1$ that lie in a disk of radius $3C$ centered at $\tx_1$ is clearly finite.  So, following the claim,  it is possible to choose a ray $r_\theta$ that first encounters any lift of $x_1$ at a distance greater than $3C$ from the ray's origin $\tx_1$.  The lift of $\tx_1$ that is first encountered by $r_\theta$ determines a primitive $\gamma \in \pi_1(M, x_\bullet)$ so that $d(\tx_1, \tx_1(\gamma)) > 3C$. See Figure \ref{fig:Gold3}. 
 Then the corresponding disk $\tD(\gamma)$ is at distance at least $2C$ from $\tD$, proving Lemma \ref{lemma:primalpha}.  \end{proof}

%
    
  \begin{figure}
  \centering
   \begin{tikzpicture}
   \usetikzlibrary {shapes.geometric}
\draw[loosely dashed]  (0,0) circle (3.5); 
\draw  (0,0) circle (1.5); 
 \draw (0.75,0) -- (1.5,0); 
 \draw[->] (0,0) -- (.75,0); 
   \node at (0.75,0.3) {$3C$};
  \filldraw [gray] (0,0) circle [radius=1.5pt]; 
   \node at (0,0.3) {$\tx_1$};
 \filldraw [gray] (-0.2,-2.5) circle [radius=1.5pt];  
  \node at (-0.2,-2.8) {$\tx_1(\gamma)$};
     inner sep=0.3cm] at (0,0) {}; 
  \node at (0.3,-2) {$r(\theta)$};
   \draw[->] (0,0) -- (-0.1,-1.25) ; 
   \draw (-0.1,-1.25) -- (-0.2,-2.5) ;
\end{tikzpicture}
   \caption{Finding primitive $\gamma \in \pi_1(M, x_1)$}
    \label{fig:Gold3}
\end{figure}

Let $\tL$ of length $|L| > 2C$ be a geodesic segment in $\tM$ from $\tx_1 \in \tD$ to $\tx_2(\gamma) \in \tD(\gamma)$.  Let $L$ denote the (embedded) image of $\tL$ in $M$.  By generic choice of $X$ we can assume that $L$ is disjoint from $X$ other than at the endpoints $\{x_1, x_2\}$ of $L$.  Since $\gamma$ is primitive, $\tL$ is disjoint from every other lift of $L$ in $\tM$.

Choose $\ddd > 0$ to be less then the minimum distance between $L$ and $X - \{x_1, x_2\}$ and also less than the distance in $\tM$ between $\tL$ and any other lift of $L$.  Then a  $\ddd/2$-neighborhood $E \subset M$ of $L$ in $M$ is a topological disk 
that intersects $X$ only in $\{x_1, x_2\}$. 

Let $\tE \subset \tM$ be the lift of $E$ that is a $\ddd/2$-neighborhood of $\tL$, and let $\tb_E: I \to F_2\tE$ be a pure braid on $\{\tx_1, \tx_2(\gamma)\}$ in $\tE \subset \tM$ that has winding number 1.  

 \begin{lemma}  \label{lemma:bigbreadth} For some $t \in [0, 1]$, either $d(\tx_1, \tb_E(t)(\tx_1))$ or $d(\tx_2, \tb_E(t)(\tx_2)) \geq C$
    \end{lemma} 
    \begin{proof}  Since $\omega([\tb_E]) \neq 0$ it follows from Proposition \ref{prop:windtrivial} that for some $t$, either $d(\tx_1, \tb_E(t)(\tx_1))$ or $d(\tx_2(\gamma), \tb_E(t)(\tx_2(\gamma)) \geq C$.  But since covering transformations are isometries, $d(\tx_2(\gamma), \tb_E(t)(\tx_2(\gamma)) = d(\tx_2, \tb_E(t)(\tx_2))$, completing the proof.
    \end{proof}

Let  $b_E: I \to F_2E \subset F_2M$ be the pure braid on $\{x_1, x_2\}$ in $E$ that is the  projection of $\tb_E$.    That is, for all $t \in I$
 \begin{align*}
 b_E(t)(x_1) &= p\tb_E(t)(\tx_1)  \\
 b_E(t)(x_2) &= p\tb_E(t)(\tx_2(\gamma))
  \end{align*}
  Let $b_L: I \to F_nM$ be the braid defined by
 \begin{align*}
 b_L(t)(x_i) &=  b_E(t)(x_i), &i &= 1, 2 \\
  b_L(t)(x_i)  &=  x_i, &i &= 3, ..., n
  \end{align*}
  
  It then follows from Lemma \ref{lemma:bigbreadth} that the breadth $\nu(b_L) \geq C$.  \begin{lemma} Suppose $b:I \to F_nM$ is any pure braid homotopic to $b_L$.  Then $\nu(b) \geq C$.
 \end{lemma}
 \begin{proof}  Since $b$ is homotopic to $b_L$ the 2-braid in $M$ that is the restriction of $b$ to $\{x_1, x_2\}$ is homotopic to $b_E$. Then its lift to $\tM$ is also homotopic to $\tb_E$ and so has winding number 1.  Now apply the same argument to $b$ as was just used for $b_L$ and conclude that also $\nu(b) \geq C$, as claimed.  
 \end{proof}
 
The last lemma implies that $\nu([b_L]) \geq C$, completing the proof of Proposition \ref{prop:mainprop}.
\end{proof}

\end{document}